\documentclass[twocolumn]{svjour3}
% [arxiv_v2: filecontents example.eps stripped, 183 chars]
\smartqed  % flush right qed marks, e.g. at end of proof
\usepackage{amsmath}
\usepackage{amsfonts}
\usepackage[utf8]{inputenc}
\usepackage[T1]{fontenc}
\usepackage{graphicx}
\usepackage{ifpdf}
\ifpdf
\usepackage[all,pdf,2cell]{xy}\UseAllTwocells\SilentMatrices
\else
\usepackage[all,xdvi,2cell]{xy}\UseAllTwocells\SilentMatrices
\fi
\newcommand{\GEA}{{\bf GEA}}
\newcommand{\EA}{{\bf EA}}
\newcommand{\Fu}{F}
\newcommand{\Uu}{U}
\newcommand{\F}[1]{\Fu(#1)}
\newcommand{\U}[1]{\Uu(#1)}
\newcommand{\FUu}{\Fu\Uu}
\newcommand{\UFu}{\Uu\Fu}
\newcommand{\FUFu}{\Fu\Uu\Fu}
\newcommand{\UFUu}{\Uu\Fu\Uu}
\newcommand{\FU}[1]{\FUu(#1)}
\newcommand{\UF}[1]{\UFu(#1)}
\newcommand*{\vcenteredhbox}[1]{\begingroup
\setbox0=\hbox{#1}\parbox{\wd0}{\box0}\endgroup}

\newcommand{\adj}[1][]{\def\ArgI{#1}\adjRelayI}
\newcommand{\adjRelayI}[1][]{\def\ArgII{#1}\adjRelayII}
\newcommand{\adjRelayII}[3][2.2em]{\ensuremath{\SelectTips{cm}{10}\xymatrix@C=#1@1{{#2} \ar@<1ex>[r]^-{\ArgI}^-{}="1" & {#3} \ar@<1ex>[l]^-{\ArgII}^-{}="2" \ar@{}"1";"2"|(.3){\hbox{}}="7" \ar@{}"1";"2"|(.7){\hbox{}}="8" \ar@{|-} "8" ;"7"}}}

\newcommand{\radjRelayII}[3][2.2em]{\ensuremath{\SelectTips{cm}{10}\xymatrix@C=#1@1{{#2} \ar@<-1ex>[r]_-{\ArgI}^-{}="1" & {#3} \ar@<-1ex>[l]_-{\ArgII}^-{}="2" \ar@{}"1";"2"|(.3){\hbox{}}="7" \ar@{}"1";"2"|(.7){\hbox{}}="8" \ar@{|-} "7" ;"8"}}}
\begin{document}

\title{A note on unitizations of generalized effect algebras}
\author{Gejza Jenča}
\institute{G. Jenča \at
Department of Mathematics and Descriptive Geometry\\
Faculty of Civil Engineering\\
Slovak University of Technology\\
Radlinsk\' eho 11\\
	Bratislava 810 05\\
	Slovak Republic\\
              \email{gejza.jenca@stuba.sk}           %  \\
}
\maketitle
\begin{abstract}
There is a forgetful functor from the category of generalized
effect algebras to the category of effect algebras. We prove that
this functor is a right adjoint and that the corresponding left
adjoint is the well-known unitization construction by Hedlíková
and Pulmannová. Moreover, this adjunction is monadic.
\subclass{Primary: 03G12, Secondary: 06F20, 81P10} 
\keywords{effect algebra, generalized effect algebra, unitization} 
\end{abstract}
\section{Introduction and preliminaries}

\subsection{Introduction}

In \cite{hedlikova1996generalized}, authors proved
that every generalized effect algebra can be embedded into an effect algebra.
The construction was subsequently studied and applied by several authors,
for example in \cite{paseka2009isomorphism}, \cite{riecanova2005generalized},
\cite{riecanova2008effect}. A generalization of the unitization construction
to pseudoeffect algebras was recently introduced and 
studied in \cite{foulis2014unitizing}

It is easy to see that this unitization construction is functorial. 
We prove that this {\em unitization functor} is left adjoint to the
forgetful functor from the generalized effect algebras to effect algebras and
that this adjunction is monadic. Thus, the category of effect algebras
is the category of algebras for a monad defined on the category of generalized
effect algebras.

We assume working knowledge of basic category theory \cite{mac1998categories} 
and theory of effect algebras \cite{DvuPul:NTiQS}.

\subsection{Generalized effect algebras}

Let $P$ be a partial algebra with a nullary operation $0$
and a binary partial operation $\oplus$. Denote the domain of $\oplus$ by
$\perp$. $P$ is called a {\em generalized effect algebra}
iff for all $a,b,c\in P$ the following conditions are satisfied :
\begin{enumerate}
\item[(P1)] $a\perp b$ implies $b\perp a$, $a\oplus b=b \oplus a$.
\item[(P2)] $b\perp c$ and $a\perp b\oplus c$ implies
        $a\perp b$, $a\oplus b\perp c$, $a\oplus (b\oplus c)=(a\oplus
        b)\oplus c$.
\item[(P3)] $a\perp 0$ and $a\oplus 0=a$.
\item[(P4)] $a\oplus b=a\oplus c$ implies $b=c$.
\item[(P5)] $a\oplus b=0$ implies $a=0$.
\end{enumerate}

In a generalized effect algebra $P$, we denote $a\leq b$ iff
$a\oplus c=b$ for some $c\in P$. It is easy to see
that $\leq$ is a partial order and that $0$ is the least element of
the poset $(P,\leq)$.
We denote $c=a\ominus b$ iff $a=b\oplus c$. Owing to cancellativity,
$\ominus$ is a well-defined partial operation with domain $\geq$.

Let $P_1$, $P_2$ be generalized effect algebras. A map $f:P_1\to P_2$ is called a
{\em morphism of generalized effect algebras} if and only if it satisfies the following conditions.
\begin{itemize}
\item $f(0)=0$.
\item If $a\perp b$, then $f(a)\perp f(b)$ and $f(a\oplus b)=f(a)\oplus f(b)$.
\end{itemize}

A morphism is $f:P_1\to P_2$ is {\em full} if $f(a)\perp f(b)$ implies that there
are $a_1,b_1\in P_1$ such that $a_1\perp b_1$, $f(a)=f(a_1)$ and $f(b)=f(b_1)$.

\subsection{Effect algebras}

An {\em effect algebra} is an generalized effect algebra bounded above.
Unwinding this definition, we observe that an effect algebra is partial algebra $(E;\oplus,0,1)$ with a binary
partial operation $\oplus$ and two nullary operations $0,1$ such that the
reduct $(E;\oplus,0)$ is a generalized effect algebra and $1$ is the greatest
element of $E$.

Effect algebras were introduced by Foulis and Bennett in their paper 
\cite{FouBen:EAaUQL}. See also \cite{KopCho:DP} and \cite{GiuGre:TaFLfUP} for
equivalent definitions, introduced independently.

Let $E_1$, $E_2$ be effect algebras. A map $\phi:E_1\to E_2$ is called a
{\em morphism of effect algebras} if and only if it is a morphism of 
generalized effect algebras satisfying the condition $\phi(1)=1$. 
A morphism $\phi:E_1\to E_2$ is a {\em full morphism} if and only if
$\phi(a)\perp\phi(b)$ implies that there are $x,y\in E_1$ such that
$\phi(x)=a$, $\phi(y)=b$ and $x\perp y$. A full, bijective morphism is an {\em isomorphism}.

\section{The unitization functor}

The category of generalized effect algebras is denoted by $\GEA$,
the category of effect algebras is denoted by $\EA$,
$\Uu:\EA\to\GEA$ is the evident forgetful functor.

Let us define a functor $F:\GEA\to\EA$, called {\em unitization}.

For a generalized effect algebra $P$, $\F P$ is a partial algebra with an underlying set
$P\dot\cup P^*$, where $P\cap P^*=\emptyset$ and $a\mapsto a^*$ is a
bijection from $P$ to $P^*$, equipped with a partial
operation given as follows: for all $a,b\in P$,
\begin{itemize}
\item $a\perp_{\F P} b$ iff $a\perp_P b$ and then $a\oplus_{\F P} b=
a\oplus_P b$,
\item $a\perp_{\F P} b^*$ iff $a\leq b$ and then $a\oplus_{\F P} b^*=
(b\ominus_P a)^*$,
\item $a^*\perp_{\F P} b$ iff $a\geq b$ and then $a^*\oplus_{\F P} b=
(a\ominus_P b)^*$,
\item $a^*\not\perp_{\F P} b^*$.
\end{itemize}
This construction was introduced by Hedlíková and Pulmannová in \cite{hedlikova1996generalized}.
They proved that $(F(P),\oplus,0,0^*)$ is always
an effect algebra. The basic idea of the construction predates effect algebras, see
\cite{janowitz1968note}, \cite{mayet1991generalized}. 
\begin{figure}
\vcenteredhbox{
	\includegraphics{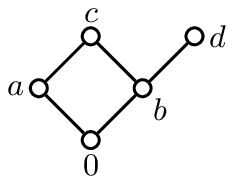}
}
\vcenteredhbox{
	\includegraphics{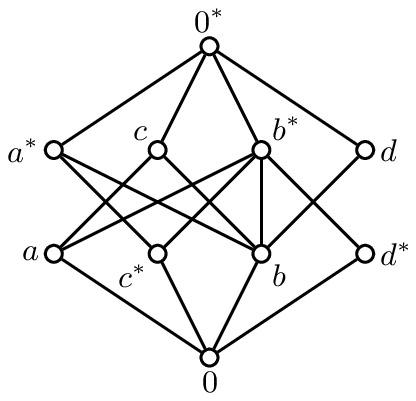}
}
\caption{A generalized effect algebra and its unitization}
\label{fig}
\end{figure}
\begin{example}
Let $P$ be the poset on the left-hand side of Figure~\ref{fig}.
There is a unique $\oplus$ partial operation on $P$, making
$(P,\oplus,0)$ into a generalized effect algebra: $a\oplus b=b\oplus a=c$, $b\oplus b=d$
and $0\oplus x=x\oplus 0=x$, for all $x\in P$.

The Hasse diagram of the effect algebra $F(P)$ appears on the right-hand side
of the picture.
\end{example}
\begin{example}
Let us consider generalized effect algebra (in fact, a total monoid)
$(\mathbb{N},+,0)$, where $+$ is the ordinary addition of natural numbers.
Then $F(\mathbb{N})$ is a totally ordered MV-algebra, also 
known under the name {\em Chang's MV-algebra}.
\end{example}

For a morphism of generalized effect algebras $f:P\to Q$, then $\F f:\F P\to\F Q$ is given by
$\F f(a)=a$, $\F f(a^*)=(f(a))^*$. It is easy to check that $\F f$ is a
morphism in the category $\EA$ and that $\Fu:\GEA\to\EA$ is a functor.

\begin{theorem}
$\Fu$ is left adjoint to $\Uu$.
\end{theorem}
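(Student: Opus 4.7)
The plan is to establish the adjunction by exhibiting the unit and verifying its universal property pointwise. For each generalized effect algebra $P$, I would define the unit $\eta_P : P \to \U{\F{P}}$ by $\eta_P(a) = a$, viewing $P$ as the ``unstarred half'' of $\F{P}$. This is a $\GEA$-morphism because $\perp$ and $\oplus$ on $\F{P}$ restrict to $\perp_P$ and $\oplus_P$ on $P$ by construction, and naturality of $\eta$ follows directly from the definition of $\Fu$ on morphisms given above.

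The main task is to show that for every effect algebra $E$ and every $\GEA$-morphism $f : P \to \U{E}$, there exists a unique $\EA$-morphism $\bar f : \F{P} \to E$ with $\U{\bar f} \circ \eta_P = f$. The candidate is forced: on the unstarred part we must have $\bar f(a) = f(a)$, and because in $\F{P}$ one computes $a \oplus_{\F{P}} a^* = (a \ominus_P a)^* = 0^* = 1_{\F{P}}$, every EA-morphism extending $f$ must send $a^*$ to the orthocomplement of $f(a)$ in $E$. I therefore set $\bar f(a) = f(a)$ and $\bar f(a^*) = 1_E \ominus f(a)$. Then $\bar f(0) = 0$ and $\bar f(0^*) = 1$ hold trivially, and uniqueness is immediate from the forcing argument above.

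What remains is to check that $\bar f$ preserves $\oplus$, which I would do by cases according to the four-part definition of $\perp_{\F{P}}$. The case $a,b\in P$ with $a\perp_P b$ is immediate from $f$ being a $\GEA$-morphism, and the case $a^* \perp_{\F{P}} b^*$ is vacuous. The only substantive verification is the mixed case $a \perp_{\F{P}} b^*$ with $a \leq_P b$: one must show
\[
f(a) \oplus (1_E \ominus f(b)) \;=\; 1_E \ominus f(b \ominus_P a).
\]
Since $b = a \oplus_P (b\ominus_P a)$ and $f$ preserves $\oplus$, we have $f(b\ominus_P a) = f(b) \ominus_E f(a)$, and the claim reduces to the standard effect-algebra identity $1 \ominus (x \ominus y) = y \oplus (1 \ominus x)$ valid when $y \leq x$. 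The symmetric case $a^* \perp_{\F{P}} b$ is handled analogously. This bookkeeping is the only place where one has to do more than unwind a definition, and it is the main, but entirely routine, obstacle.

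Once the universal property is in place, the bijection $\mathrm{Hom}_{\EA}(\F{P}, E) \cong \mathrm{Hom}_{\GEA}(P, \U{E})$ given by $g \mapsto \U{g} \circ \eta_P$, with inverse $f \mapsto \bar f$, is natural in both variables by a standard formal argument, so $\Fu \dashv \Uu$.
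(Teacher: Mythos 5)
Your proof is correct, but it takes a genuinely different route from the paper's. You establish the adjunction by showing that each $\eta_P\colon P\to\UF{P}$ is a universal arrow from $P$ to $\Uu$, i.e.\ by constructing, for every $\GEA$-morphism $f\colon P\to\U{E}$, the unique $\EA$-morphism $\bar f\colon\F{P}\to E$ with $\U{\bar f}\circ\eta_P=f$. The paper instead writes down the counit explicitly and verifies the two triangle identities: it first proves the structural fact that $\FU{E}$ is isomorphic to $E\times\{0,1\}$ via $a\mapsto(a,0)$, $a^*\mapsto(a',1)$, and takes $\epsilon_E$ to be this isomorphism followed by the projection onto $E$. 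Note that your $\bar f$ is exactly $\epsilon_E\circ\F{f}$, and the single substantive computation is the same in both arguments, namely the identity $(x\ominus y)'=y\oplus x'$ for $y\leq x$ (your mixed case $a\perp_{\F P}b^*$ corresponds to the paper's verification that $w$ preserves $\oplus$ in the case $x=a$, $y=b^*$). The trade-off: the paper's route yields the observation $\FU{E}\cong E\times\{0,1\}$ as a by-product of independent interest, while your route delivers the universal property in precisely the form later exploited in Corollary~\ref{cor:stateextends}, and your forcing argument $a\oplus_{\F P}a^*=0^*=1_{\F P}$ gives a transparent reason for the uniqueness of the extension.
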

\begin{proof}
Let  us define the unit $\eta$. For every generalized effect algebra $P$, the
component $\eta_P:P\to \UF P)$ is the embedding $\eta_P(x)=x$. This is
obviously a natural transformation $1_{\GEA}\to \UFu$.

Let $E$ be an effect algebra; to define the component of the counit $\epsilon$
at $E$, we need to take a closer look at $\FU E$. Let us prove that $w:\FU E\to
E\times\{0,1\}$ given by $w(a)=(a,0)$ and $w(a^*)=(a',1)$, where $a\in E$, 
is an isomorphism of
effect algebras. Indeed, suppose that $x,y\in\FU E$ and that $x\perp y$.  The
only nontrivial case we have to check is when there  are $a,b\in E$ such that
$x=a$, $y=b^*$ and $a\leq b$; in this case $x\oplus y=(b\ominus_E a)^*$ and 
\begin{multline*}
w(x\oplus y)=w((b\ominus_E a)^*)=((b\ominus a)',1)\\=(b'\oplus a,1)=(a,0)\oplus(b',1)=
w(a)\oplus w(b^*)=w(x)\oplus w(y).
\end{multline*}

The morphism $w$ is easily seen to be full and bijective, hence an isomorphism.  

We may now define $\epsilon_E:\FU E\to E$ as
the composition of $w$ with the canonical projection $p:E\times\{0,1\}\to E$.
Explicitly, $\epsilon_E(x)=x$ for $x\in E$ and $\epsilon_E(x^*)=x'$ for
$x^*\in E^*$.  The commutativity of the naturality square of $\epsilon$ also
clear.

Let us check the triangle identities. We need to prove that, in the categories
of endofunctors of $\GEA$  and $\EA$, respectively, the triangles
$$
\xymatrix{
\Fu\ar[r]^-{\Fu\eta}\ar[rd]_{1_\Fu}&\FUFu\ar[d]^{\epsilon\Fu}\\
 &\Fu
}
\quad
\xymatrix{
\Uu\ar[r]^-{\eta\Uu}\ar[rd]_{1_\Uu}&\UFUu\ar[d]^{\Uu\epsilon}\\
 &\Uu
 }
$$
commute. 

To observe the commutativity of the left triangle, let $P$ be a generalized
effect algebra. If $x\in P$, then $\F{\eta_P}(x)=x$ and $\epsilon_{\F P}(x)=x$.
If $x^*\in P$, then $\F{\eta_P(x^*)}=(\eta_P(x))^*=x^*$ and $\epsilon_{\F
P}(x^*)=x^*$.

To observe the commutativity of the right triangle, let $E$ be an effect
algebra and let $x\in\U E$. 
Then $\eta_{\U E}(x)=x$ and $\U{\epsilon_{\FUu}(x)}=x$.  
\end{proof} 

Let us consider the real interval $[0,1]_{\mathbb R}$, equipped with the usual addition of
real numbers restricted to $[0,1]_{\mathbb R}$, meaning that $a\oplus b$ is defined
if and only if $a+b\leq 1$ and then $a\oplus b:=a+b$. A morphism of
generalized effect algebras $P\to[0,1]_{\mathbb R}$ is called an {\em additive map on $P$}.

For an effect algebra $E$, a {\em state on $E$} is an additive map $E\to [0,1]_{\mathbb R}$
preserving the unit, so a state is a morphism in $\EA$.

\begin{corollary}
\label{cor:stateextends}
Every additive map on $P$ uniquely extends to a state on $F(P)$.
\end{corollary}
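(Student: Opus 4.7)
The plan is to read the corollary as a direct instance of the universal property of the unit of the adjunction $\Fu \dashv \Uu$ established in the preceding theorem. The real interval $[0,1]_{\mathbb R}$ with the truncated addition is itself an effect algebra with unit $1$, so it lives in $\EA$; applying $\Uu$ forgets the unit and gives the generalized effect algebra structure in which ``additive map on $P$'' is defined. Thus an additive map on $P$ is exactly a morphism $m\colon P\to \U{[0,1]_{\mathbb R}}$ in $\GEA$, while a state on $\F P$ is exactly a morphism $s\colon \F P\to[0,1]_{\mathbb R}$ in $\EA$.

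First I would make those two identifications explicit, noting that the unit-preservation condition defining a state is automatic once one is in $\EA$, since morphisms there preserve $1$ by definition. Then I would invoke the adjunction bijection
\[
\mathrm{Hom}_{\EA}(\F P,[0,1]_{\mathbb R})\;\cong\;\mathrm{Hom}_{\GEA}(P,\U{[0,1]_{\mathbb R}}),
\]
which is natural in both variables. The unit component $\eta_P\colon P\to\UF P$ is the inclusion, so the bijection sends $s$ to $\Uu(s)\circ\eta_P$; in concrete terms, $s$ restricted to $P\subseteq \F P$ is $m$, which is precisely what ``$s$ extends $m$'' means.

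Consequently, given an additive map $m$, the unique $s\in \mathrm{Hom}_{\EA}(\F P,[0,1]_{\mathbb R})$ corresponding to $m$ under the adjunction is the unique state on $\F P$ whose restriction to $P$ is $m$. Existence and uniqueness both come for free from the adjunction. There is no real obstacle here beyond spelling out that $[0,1]_{\mathbb R}$ is viewed as an object of $\EA$ and that restriction along $\eta_P$ is the map implementing the hom-set bijection; the only thing one could want to verify by hand, as a sanity check, is the explicit formula $s(a)=m(a)$ for $a\in P$ and $s(a^*)=1-m(a)$ for $a^*\in P^*$, which follows immediately from the description of $\epsilon$ in the proof of the adjunction together with the fact that $s$ must preserve $0^*=1$.
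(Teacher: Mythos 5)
Your proposal is correct and follows essentially the same route as the paper, which likewise derives the corollary as an instance of the universal property of the unit $\eta_P$ of the adjunction $\Fu\dashv\Uu$, with the hom-set bijection given by restriction along $\eta_P$. Your closing remark about the explicit formula $s(a^*)=1-m(a)$ matches the paper's own discussion immediately following the corollary.
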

\begin{proof}
If $s$ is an additive map, then there is a unique $\bar s:F(P)\to [0,1]_{\mathbb R}$
such that the diagram
$$
\xymatrix{
UF(P)\ar[r]^{U(\bar s)}&U([0,1]_{\mathbb R})\\
P\ar[u]_{\eta_P}\ar[ur]_f
}
$$
commutes.
\end{proof}

Every state $f$ on an effect algebra must satisfy $f(x')=1-f(x)$. Therefore,
if $s$ is an additive map on $P$, then the state $\bar s$ on $F(P)$ corresponding
to $s$ via the bijection established in \ref{cor:stateextends} is necessarily given by
$$
\bar s(x)=
\begin{cases}
s(x)& x\in P\\
1-s(x)& x\in P^*
\end{cases}
$$

In a very similar way, one can prove the following:
\begin{corollary}
There is a natural one-to-one correspondence
between ideals of $P$ and morphisms $F(P)\to 2^2$, where $2^2$ is the Boolean algebra with two atoms.
\end{corollary}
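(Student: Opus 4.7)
The plan is to replay the argument of Corollary~\ref{cor:stateextends}, this time applied to the object $2^2$. By the adjunction $\Fu\dashv\Uu$ established in the theorem, morphisms $\F P\to 2^2$ in $\EA$ correspond naturally and bijectively to morphisms $P\to\U{2^2}$ in $\GEA$. Thus the corollary reduces to producing a natural bijection between such $\GEA$-morphisms and ideals of $P$.

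To do this I would first unpack the partial structure on $\U{2^2}$. Writing the two atoms of $2^2$ as $a_1,a_2$, the partial operation $\oplus$ on $\U{2^2}$ is defined only on pairs containing $0$ and on the unordered pair $\{a_1,a_2\}$; in particular no nonzero element is orthogonal to itself, and $1$ is orthogonal only to $0$. Given a morphism $\psi:P\to\U{2^2}$, these constraints force the four fibres to interact in a very rigid way, and the ideal attached to $\psi$ is the preimage of one of the ``halves'' of $2^2$, namely $I_\psi=\psi^{-1}(\{0,a_1\})$. A direct check, using only the description of $\oplus$ on $\U{2^2}$, shows that $I_\psi$ contains $0$, is downward closed, and is closed under defined sums, so it is an ideal of $P$.

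Conversely, given an ideal $I$ of $P$, I would define $\psi_I:P\to\U{2^2}$ by classifying elements of $P$ into four classes read off from $I$ and verify, using the ideal axioms, that $\psi_I$ is a $\GEA$-morphism. Then $I\mapsto\psi_I$ and $\psi\mapsto I_\psi$ should be seen to be mutually inverse, and both constructions are natural in $P$ because they arise from the adjunction and from pullbacks of subsets along morphisms.

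The main obstacle is writing down the four-fold splitting associated with an ideal so that one really does land in $\U{2^2}$ rather than merely in the sub-GEA $\{0,a_1\}$; this is where the extra flexibility provided by the second atom of $2^2$ is needed, and it is exactly this flexibility that prevents the naive ``indicator function'' $P\to\U{\mathbf{2}}$ from being a morphism when $P$ contains orthogonal elements outside $I$. Once the splitting is laid out correctly, the remaining verifications are routine and parallel those of Corollary~\ref{cor:stateextends}.
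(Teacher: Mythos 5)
Your opening move --- invoking the adjunction $\Fu\dashv\Uu$ to replace morphisms $\F P\to 2^2$ by $\GEA$-morphisms $P\to\U{2^2}$ --- is precisely the route the paper intends: its ``proof'' is nothing more than the remark that the argument of Corollary~\ref{cor:stateextends} can be repeated with $2^2$ in place of $[0,1]_{\mathbb R}$. The problem is that the second half of your plan, which is where all the content lies, does not go through. The map $\psi\mapsto I_\psi=\psi^{-1}(\{0,a_1\})$ is indeed always an ideal (preimages of ideals under $\GEA$-morphisms are ideals), but it is not a bijection onto the ideals of $P$, and no choice of the ``four-fold splitting'' that you defer can fix this, because the two sets being matched can simply have different sizes. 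Concretely: (i) take $P=\{0,u\}$ with $u\oplus u$ undefined (the unit interval of the two-element Boolean algebra); a morphism $P\to\U{2^2}$ may send $u$ to \emph{any} of the four elements, so there are four morphisms $\F P\to2^2$, while $P$ has only two ideals, and your $I_\psi$ is two-to-one. (ii) Take $P=(\mathbb N,+,0)$; since $1\perp 1$, any morphism must send $1$ to a self-orthogonal element of $\U{2^2}$, i.e.\ to $0$, so the zero map is the only morphism, while $\mathbb N$ has the two ideals $\{0\}$ and $\mathbb N$.

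The structural reason is that $\U{2^2}\cong\U{2}\times\U{2}$, so $\GEA$-morphisms $P\to\U{2^2}$ are exactly \emph{pairs} of two-valued additive maps on $P$; the kernel of a two-valued additive map is an ideal whose complement contains no orthogonal pair, which is a much more special object than an arbitrary ideal (and, conversely, a single such ideal can be the kernel of several morphisms). So before the ``remaining verifications'' can be called routine, you would have to pin down exactly which notion of ideal (or which restricted class of morphisms, e.g.\ up to the atom-swapping automorphism of $2^2$) makes the correspondence a genuine bijection; as your argument stands, the key step --- constructing $\psi_I$ from an arbitrary ideal $I$ and proving mutual inversity --- is asserted at exactly the point where it fails.
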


\begin{lemma}
\label{lemma:coequalizers}
The forgetful functor $U:\EA\to\GEA$ creates coequalizers.
\end{lemma}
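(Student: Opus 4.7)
The plan is to unpack the meaning of creation: given a parallel pair $f, g : E_1 \to E_2$ in $\EA$ together with a coequalizer $q : \U{E_2} \to Q$ of $\Uu f, \Uu g$ in $\GEA$, one must exhibit a unique effect algebra structure on $Q$ for which $q$ becomes a morphism in $\EA$, and then verify that $q$ with this structure is a coequalizer of $f, g$ in $\EA$.

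The first task is to pick the top element of $Q$. Since an effect algebra is just a generalized effect algebra with a greatest element, the only datum to specify is $1_Q$, and any EA-structure making $q$ an $\EA$-morphism is forced to satisfy $1_Q = q(1_{E_2})$; uniqueness of the lifted structure is then immediate. The real content is to check that $q(1_{E_2})$ really is the maximum of $(Q, \leq)$. For this I would use that $q$ is surjective, which follows from the standard description of coequalizers in a category of partial algebras as a quotient by the smallest compatible congruence identifying $\Uu f(x)$ with $\Uu g(x)$ for each $x$. Writing an arbitrary $y \in Q$ as $q(x)$ and applying the $\GEA$-morphism $q$ to the relation $x \oplus x' = 1_{E_2}$ valid in $E_2$ gives $q(x) \oplus q(x') = q(1_{E_2}) = 1_Q$, whence $y \leq 1_Q$.

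The universal property in $\EA$ is then essentially free. Given an $\EA$-morphism $h : E_2 \to E_3$ with $hf = hg$, the $\GEA$-universal property of $q$ yields a unique $\GEA$-morphism $k : Q \to \U{E_3}$ with $k q = \Uu h$, and $k$ preserves the top because $k(1_Q) = k(q(1_{E_2})) = h(1_{E_2}) = 1_{E_3}$; hence $k$ is already a morphism in $\EA$. Uniqueness of the lift in $\EA$ is inherited from uniqueness in $\GEA$ via faithfulness of $\Uu$.

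The only non-trivial point is the surjectivity of the coequalizer $q$ in $\GEA$: without it one has no grip on the elements of $Q$ and cannot argue that $q(1_{E_2})$ dominates them. Everything else is dictated by the forced choice of $1_Q$ and a short diagram chase.
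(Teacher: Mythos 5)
Your reduction of the problem to the single question of whether $q(1_{E_2})$ is the greatest element of $Q$, and your handling of the lifted universal property and uniqueness, are all correct and agree with the paper. The gap is precisely in the step you yourself flag as the only non-trivial one: the surjectivity of the coequalizer $q$ in $\GEA$. You justify it by appealing to ``the standard description of coequalizers in a category of partial algebras as a quotient by the smallest compatible congruence,'' but that description is not available in $\GEA$. The axioms (P4) (cancellativity) and (P5) (positivity) are not equations and are not preserved by quotients, so the quotient of $\U{E_2}$ by the congruence generated by $\Uu f(x)\sim \Uu g(x)$ need not be a generalized effect algebra at all; the coequalizer in $\GEA$, when it exists, is therefore not in general obtained as such a quotient, and nothing in your argument shows its underlying map is onto. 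Without that, the claim that every $y\in Q$ has the form $q(x)$ is unsupported, and the whole argument that $q(1_{E_2})$ is a top element collapses.

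The paper sidesteps this by using only the fact that a coequalizer $h\colon \U{B}\to Z$ is an \emph{epimorphism}. Since $h(x)\leq h(1)$ for all $x$, the map $h$ corestricts to a morphism $t\colon \U{B}\to [0,h(1)]_Z$ with $t\circ \Uu f=t\circ \Uu g$; the universal property of the coequalizer yields $u\colon Z\to [0,h(1)]_Z$ with $u\circ h=t$, hence $j\circ u\circ h=h$ for the inclusion $j$, and cancelling the epimorphism $h$ gives $j\circ u=\mathrm{id}_Z$, so that every $x\in Z$ satisfies $x=j(u(x))\leq h(1)$. This requires no knowledge of the elements of $Z$ beyond the universal property. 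To rescue your route you would have to actually construct coequalizers in $\GEA$ and prove them surjective, which is a substantially harder (and here unnecessary) undertaking; alternatively, replace the surjectivity argument by the interval trick above.
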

\begin{proof}
Let $f,g\colon A\to B$ be a pair of morphisms in $\EA$, let
$h\colon B\to Z$ be a coequalizer of $f,g$ in $\GEA$. We need to prove that
$h(1)$ is the top element of $Z$.
Consider the diagram
$$
\xymatrix{
U(A)\ar@<0.7ex>[r]^{U(f)}\ar@<-0.7ex>[r]_{U(g)}&U(B)\ar[r]^h\ar[rd]_t&Z\ar@<0.7ex>@{.>}[d]^u\\
~&~&{[0,h(1)]_Z}\ar@<0.7ex>@{^{(}->}[u]^j
}
$$
where $t$ is given by $t(x)=h(x)$ and $j$ is the inclusion into $Z$, so that
$j\circ t=h$.  Since $t\circ f=t\circ g$, there is a unique $u\colon
Z\to[0,h(1)]_Z$ such that $u\circ h=t$.  This gives us $j\circ u\circ h=h$.
Since $h$ is an epimorphism, $j\circ u=id_Z$ and now we see that for every $x\in
Z$, $x=j(u(x))\leq h(1)$, because the range of $j$ is bounded above by $h(1)$.

It remains to prove that $h$ is a coequalizer in $\EA$. If $E$ is a
generalized effect algebra bounded above and $s$ is a top-preserving morphism such that
$s\circ f=s\circ g$, then there is a unique $\GEA$-morphism $u$ such that $u\circ h=s$.
However, since both $h$ and $s$ preserve the top element, $u$ must be top-preserving
as well. 
\end{proof}
Recall \cite{mac1998categories}, that every adjunction 
$$
\adj[F][U]{\mathcal{C}}{\mathcal{D}}
$$ gives rise to a monad
$(UF,\eta,U\epsilon F)$ on $\mathcal{C}$. An adjunction is {\em monadic} if $U$ is equivalent to
the forgetful functor coming from the category of algebras $\mathcal{C}^{UF}$ 
for the monad $(UF,\eta,\epsilon)$ and the comparison gives us then an isomorphism 
$\mathcal{D}\simeq\mathcal {C}^{UF}$.
\begin{theorem}
The adjunction $(F,U,\eta,\epsilon)$ is monadic.
\end{theorem}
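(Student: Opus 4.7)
The plan is to apply Beck's (precise) monadicity theorem, which states that a functor $U$ with a left adjoint is monadic if and only if $U$ reflects isomorphisms and creates coequalizers of $U$-split pairs. Two of these three ingredients are already in place: the left adjoint $F$ was exhibited in the preceding theorem, and Lemma~\ref{lemma:coequalizers} shows that $U$ creates \emph{all} coequalizers, which is strictly stronger than creating merely the $U$-split ones.

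The only remaining step in the plan is to verify that $U$ reflects isomorphisms. I expect this to be straightforward: if $\phi\colon E_1\to E_2$ is a morphism in $\EA$ whose underlying $\GEA$-morphism $U(\phi)$ has a $\GEA$-inverse $\psi$, then applying $\psi$ to the identity $\phi(1)=1$ yields $\psi(1)=1$, so $\psi$ preserves the unit and therefore lifts to a morphism in $\EA$, which is a two-sided inverse of $\phi$.

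With these three ingredients assembled, Beck's theorem will immediately deliver the desired monadicity. Moreover, since $U$ creates coequalizers strictly (not merely up to isomorphism), the comparison functor $\EA\to\GEA^{UF}$ will in fact be an isomorphism of categories. I do not expect any substantive obstacle at this stage: the serious work has been packaged inside Lemma~\ref{lemma:coequalizers}, and the present argument merely combines it with the trivial reflection-of-isomorphisms check and feeds the result into Beck's theorem.
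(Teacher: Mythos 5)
Your proposal is correct and follows essentially the same route as the paper: both reduce the statement to Beck's monadicity theorem combined with Lemma~\ref{lemma:coequalizers}, which shows that $U$ creates all coequalizers and hence in particular those required by Beck's theorem. The only difference is cosmetic --- the paper quotes the version of Beck's theorem phrased via (strict) creation of absolute coequalizers, in which reflection of isomorphisms is absorbed into the notion of creation, whereas you quote the variant listing reflection of isomorphisms as a separate hypothesis and then verify it directly (correctly).
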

\begin{proof}
By Beck's theorem \cite{mac1998categories}, an adjunction is monadic
if and only if $U$ creates absolute coequalizers. By Lemma \ref{lemma:coequalizers},
$U$ creates all coequalizers.
\end{proof}
\begin{corollary}
$\EA\simeq\GEA^{UF}$.
\end{corollary}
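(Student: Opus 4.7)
The plan is to derive this corollary directly from the preceding theorem establishing monadicity of the adjunction $(F,U,\eta,\epsilon)$. The argument is essentially a matter of unpacking definitions: by the definition of a monadic adjunction recalled just before the theorem, saying that the adjunction is monadic means precisely that the comparison functor $K\colon\EA\to\GEA^{UF}$ associated with the adjunction is an equivalence of categories.

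Concretely, I would first recall the construction of the comparison functor $K$. Given an effect algebra $E$, the object $K(E)$ is the underlying generalized effect algebra $U(E)$ equipped with the structure map $U(\epsilon_E)\colon UF(U(E))\to U(E)$, which is the canonical $T$-algebra structure on $U(E)$ for $T=UF$. On morphisms, $K$ is defined by $K(\phi)=U(\phi)$, and one checks routinely that $U(\phi)$ commutes with the algebra structure maps because $\epsilon$ is a natural transformation.

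Next, I would invoke the previous theorem: since $(F,U,\eta,\epsilon)$ is monadic, by the very definition of monadicity, $K$ is an equivalence of categories. This yields the desired equivalence $\EA\simeq\GEA^{UF}$.

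There is no real obstacle here, as all the work has already been carried out in Lemma~\ref{lemma:coequalizers} and the monadicity theorem via Beck's criterion. The only thing worth emphasizing, if one wishes to be explicit, is that the forgetful functor $\GEA^{UF}\to\GEA$ sends a $T$-algebra to its underlying generalized effect algebra, and under $K$ this composite recovers $U\colon\EA\to\GEA$, so the equivalence is compatible with the forgetful functors on both sides.
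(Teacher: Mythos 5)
Your proposal is correct and matches the paper's (implicit) argument: the corollary is an immediate consequence of the monadicity theorem together with the definition of a monadic adjunction recalled just before it, and the paper accordingly gives no separate proof. Your explicit description of the comparison functor $K$ and its compatibility with the forgetful functors is a harmless elaboration of the same reasoning.
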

\begin{acknowledgements}
This research is supported by grant VEGA G-2/0059/12 of M\v S SR,
Slovakia and by the Slovak Research and Development Agency under the contracts
APVV-0073-10, APVV-0178-11.
\end{acknowledgements}

%\bibliography{mybib}
%\bibliographystyle{spmpsci}
\end{document}